\documentclass[a4paper,12pt]{article}
\usepackage{amsmath}
\usepackage{amsthm}
\usepackage{amssymb}
\usepackage{enumerate}
\usepackage{url}

\setlength{\textwidth}{38em}
\oddsidemargin=3pt 
\addtolength{\topmargin}{-10mm}
\setlength{\textheight}{230mm}

\title{A pathwise interpretation of the Gorin-Shkolnikov identity}

\author{Yuu Hariya\thanks{Mathematical Institute, 
Tohoku University, Aoba-ku, Sendai 980-8578, Japan. }}

\date{\empty}
\pagestyle{myheadings}

\numberwithin{equation}{section}

\theoremstyle{plain}

\newtheorem{thm}{Theorem}[section]

\newtheorem{prop}{Proposition}[section]

\theoremstyle{definition}

\theoremstyle{remark}
\newtheorem{rem}{Remark}[section]

%%%%%%    TEXT START    %%%%%%
\begin{document}

\def\N {\mathbb{N}}
\def\R {\mathbb{R}}
\def\Q {\mathbb{Q}}

\def\calF {\mathcal{F}}

\def\kp {\kappa}

\def\ind {\boldsymbol{1}}

\def\al {\alpha }
\def\la {\lambda }
\def\ve {\varepsilon}
\def\Om {\Omega}

\def\v {v}

\def\ga {\gamma }

\def\W {\mathbb{W}}
\def\H {\mathbb{H}}
\def\A {\mathcal{A}}

\newcommand\ND{\newcommand}
\newcommand\RD{\renewcommand}

\ND\lref[1]{Lemma~\ref{#1}}
\ND\tref[1]{Theorem~\ref{#1}}
\ND\pref[1]{Proposition~\ref{#1}}
\ND\sref[1]{Section~\ref{#1}}
\ND\ssref[1]{Subsection~\ref{#1}}
\ND\aref[1]{Appendix~\ref{#1}}
\ND\rref[1]{Remark~\ref{#1}} 
\ND\cref[1]{Corollary~\ref{#1}}
\ND\eref[1]{Example~\ref{#1}}
\ND\fref[1]{Fig.\ {#1} }
\ND\lsref[1]{Lemmas~\ref{#1}}
\ND\tsref[1]{Theorems~\ref{#1}}
\ND\dref[1]{Definition~\ref{#1}}
\ND\psref[1]{Propositions~\ref{#1}}
\ND\rsref[1]{Remarks~\ref{#1}}
\ND\sssref[1]{Subsections~\ref{#1}}

\ND\pr{\mathbb{P}}
\ND\ex{\mathbb{E}}
\ND\br{W}
\ND\bes{r}
\ND\loc{l}

\ND\be[1]{R^{(#1)}}

\ND\E[1]{\mathcal{E}^{#1}}
\ND\no[2]{\|{#1}\|_{#2}}
\ND\nt[2]{\|{#1}\|_{#2}}
\ND\tr[2]{T^{#1}_{#2}}
\ND\si{\mathcal{S}}
\ND\lhs[1]{\log\ex\left[e^{{#1}(\br)}\right]}
\ND\inner[2]{({#1},{#2})_{\H }}
\ND\cpl[2]{\langle {#1},{#2}\rangle}

\def\thefootnote{{}}

\maketitle 

\begin{abstract}
In a recent paper by Gorin and Shkolnikov (2016), they have found, as 
a corollary to their result relevant to random matrix theory, that 
the area below a normalized Brownian excursion minus one half of 
the integral of the square of its total local time, is identical 
in law with a centered Gaussian random variable with variance 
$1/12$. 
%%Here $\{ r_{t}\} _{0\le t\le 1}$ is a normalized Brownian 
%%excursion and $\loc _{x}$ is its total local time 
%%at level $x\ge 0$. 
In this note, we give a pathwise interpretation to 
their identity; Jeulin's identity connecting normalized Brownian excursion 
and its local time plays an essential role in the exposition. 
\footnote{E-mail: hariya@math.tohoku.ac.jp}
\end{abstract}

%%%%%% New section %%%%%%
\section{Introduction}\label{;intro}

Let $\bes =\{ \bes _{t}\} _{0\le t\le 1}$ be a normalized Brownian 
excursion, that is, it is identical in law with a standard $3$-dimensional 
Bessel bridge, which has the duration $[0,1]$, and starts from and ends at 
the origin; see e.g., \cite[Section~\thetag{2.2}]{by} and references therein 
for the definition of normalized Brownian excursion and its equivalence in 
law with standard $3$-dimensional Bessel bridge. We denote by 
$\loc =\{ \loc _{x}\} _{x\ge 0}$ the total local time process of $\bes $; 
namely, by the occupation time formula, two processes 
$\bes $ and $\loc $ are related via 
\begin{align}\label{;defH}
 H(x):=\int _{0}^{1}\ind _{\{ \bes _{t}\le x\} }\,dt
 =\int _{0}^{x}\loc _{y}\,dy \quad \text{for all $x\ge 0$, a.s.}
\end{align}
In a recent paper \cite{gs}, Gorin and Shkolnikov have found the following 
remarkable identity in law as a corollary to one of their results: 
\begin{thm}[\cite{gs}, Corollary~2.15]\label{;mt}
 The random variable $X$ defined by 
 \begin{align*}
  X:=\int _{0}^{1}\bes _{t}\,dt-\frac{1}{2}\int _{0}^\infty 
 \left( \loc _{x}\right) ^{2}dx
 \end{align*}
 is a centered Gaussian random variable with variance $1/12$. 
\end{thm}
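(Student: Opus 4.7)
The plan is to use Jeulin's identity to reduce $X$ to a linear functional of a standard Brownian motion. By the occupation time formula, $\int_{0}^{1} \bes_{t}\, dt = \int_{0}^{\infty} x\, \loc_{x}\, dx$, and the change of variables $u = H(x)$ (so that $du = \loc_{x}\, dx$) yields
\[
  \frac{1}{2}\int_{0}^{\infty} \loc_{x}^{2}\, dx = \int_{0}^{1} \tilde{\bes}_{u}\, du, \qquad \tilde{\bes}_{u} := \tfrac{1}{2}\loc_{H^{-1}(u)}.
\]
Jeulin's identity, the paper's key tool, asserts that $\tilde{\bes}$ is again a normalized Brownian excursion. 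Using $(H^{-1})'(u) = 1/\loc_{H^{-1}(u)} = 1/(2\tilde{\bes}_{u})$ together with Fubini, I rewrite
\[
  \int_{0}^{1}\bes_{t}\, dt = \int_{0}^{1} H^{-1}(u)\, du = \int_{0}^{1}\!\!\int_{0}^{u}\frac{dv}{2\tilde{\bes}_{v}}\, du = \int_{0}^{1} \frac{1-v}{2\tilde{\bes}_{v}}\, dv,
\]
so that $X$ becomes a functional of the single excursion $\tilde{\bes}$:
\[
  X = \int_{0}^{1}\left( \frac{1-v}{2\tilde{\bes}_{v}} - \tilde{\bes}_{v}\right)dv.
\]

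To evaluate this functional I will exploit the $3$-dimensional Bessel bridge SDE satisfied by $\tilde{\bes}$: there is a standard Brownian motion $\br$ such that
\[
  d\tilde{\bes}_{v} = d\br_{v} + \frac{dv}{\tilde{\bes}_{v}} - \frac{\tilde{\bes}_{v}}{1-v}\, dv, \qquad v \in [0,1).
\]
Multiplying by $(1-v)$, integrating on $[0,1]$, and applying the integration by parts $\int_{0}^{1} (1-v)\, d\tilde{\bes}_{v} = \int_{0}^{1} \tilde{\bes}_{v}\, dv$ (which uses $\tilde{\bes}_{0} = \tilde{\bes}_{1} = 0$), the term $\int_{0}^{1}\tilde{\bes}_{v}/(1-v)\,(1-v)\,dv$ cancels the one coming from $d\tilde{\bes}_{v}$ and I expect to obtain
\[
  \int_{0}^{1} \frac{1-v}{\tilde{\bes}_{v}}\, dv = 2\int_{0}^{1} \tilde{\bes}_{v}\, dv - \int_{0}^{1} (1-v)\, d\br_{v}.
\]
Plugging this back, the two copies of $\int_{0}^{1}\tilde{\bes}_{v}\, dv$ cancel and leave
\[
  X = -\tfrac{1}{2}\int_{0}^{1} (1-v)\, d\br_{v},
\]
which is manifestly a centered Gaussian with variance $\tfrac{1}{4}\int_{0}^{1} (1-v)^{2}\, dv = 1/12$.

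The main technical obstacle is the behaviour at the endpoints, where $\tilde{\bes}_{v}$ vanishes like $\sqrt{v(1-v)}$: the integral $\int_{0}^{1}(1-v)/\tilde{\bes}_{v}\, dv$ is only mildly singular and is almost surely finite, but the Bessel bridge SDE degenerates at $v=1$, so the manipulations above should be carried out on $[\varepsilon, 1-\varepsilon]$ and then passed to the limit $\varepsilon\downarrow 0$ by dominated convergence for the Lebesgue integrals together with an $L^{2}$ estimate for the stochastic integral $\int_{0}^{\cdot}(1-v)\, d\br_{v}$. The genuinely nontrivial input is Jeulin's identity itself, which I take as a black box; everything else is elementary stochastic calculus for the $3$-dimensional Bessel bridge.
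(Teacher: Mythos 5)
Your proof is correct and rests on exactly the same two ingredients as the paper's --- Jeulin's identity and the $3$-dimensional Bessel bridge SDE tested against $1-v$ --- applied in the reverse order: the paper first integrates the SDE for $\bes$ to obtain $\tfrac12\int_0^1\br_t\,dt=\int_0^1\bes_t\,dt-\tfrac12\int_0^1\frac{1-t}{\bes_t}\,dt$ and then invokes Jeulin to identify the right-hand side in law with $-X$, whereas you first use Jeulin to write $X$ pathwise as a functional of $\widetilde{\bes}$ and then integrate the SDE for $\widetilde{\bes}$. Since $\int_0^1\br_t\,dt=\int_0^1(1-t)\,d\br_t$, the computations coincide, and this is essentially the paper's argument.
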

In \cite{gs}, they have shown that the expected value of the 
trace of a random operator indexed by $T>0$, arising from 
random matrix theory, admits the representation 
\begin{align*}
 \sqrt{\frac{2}{\pi T^{3}}}
 \ex \left[ 
 \exp \left( -\frac{T^{3/2}}{2}X\right) 
 \right]%% \quad \text{for all $T>0$;}
\end{align*}
for any $T>0$; in comparison of this expression with the existing literature 
asserting that the expected value is equal to 
$\sqrt{2/(\pi T^{3})}\exp \left( {T^{3}/96}\right)$ for every $T>0$, 
they have obtained 
\tref{;mt} by the analytic continuation and 
the uniqueness of characteristic functions. 

In this note, we give a proof of \tref{;mt} without relying on random 
matrix theory; Jeulin's identity in law 
(\cite[p.\,264]{jeu}, \cite[Proposition~3.6]{by}): 
\begin{align}\label{;ji}
 \{ \bes _{t}\} _{0\le t\le 1}
 \stackrel{(d)}{=}\left\{ 
 \frac{1}{2}\loc _{H^{-1}(t)}
 \right\} _{0\le t\le 1}
\end{align}
with 
\begin{align*}
 H^{-1}(t):=\inf \left\{ 
 x\ge 0;\,H(x)\ge t
 \right\} , 
\end{align*}
plays a central role in the proof. 

%%%%%% New Section %%%%%%
\section{Proof of \tref{;mt}}\label{;prf}

In this section, we give a proof of \tref{;mt} and provide 
some relevant results. 

\begin{proof}[Proof of \tref{;mt}]
 Recall from the representation of $\bes $ by means of a stochastic 
 differential equation (see, e.g., 
 \cite[Chapter~XI, Exercise~\thetag{3.11}]{ry}) that the process 
 $\br =\{ \br _{t}\} _{0\le t\le 1}$ defined by 
 \begin{align}\label{;brm}
  \br _{t}:=\bes _{t}-\int _{0}^{t}\frac{ds}{\bes _{s}}
  +\int _{0}^{t}\frac{\bes _{s}}{1-s}\,ds
 \end{align}
 is a standard Brownian motion. We integrate both sides over $[0,1]$ 
 and use Fubini's theorem on the right-hand side to see that 
 \begin{align*}
  \int _{0}^{1}\br _{t}\,dt
  =\int _{0}^{1}\bes _{t}\,dt
  -\int _{0}^{1}\frac{ds}{\bes _{s}}\int _{s}^{1}dt
  +\int _{0}^{1}ds\,\frac{\bes _{s}}{1-s}\int _{s}^{1}dt, 
 \end{align*}
 which entails 
 \begin{align}\label{;gauss1}
  \frac{1}{2}\int _{0}^{1}\br _{t}\,dt
  =\int _{0}^{1}\bes _{t}\,dt
  -\frac{1}{2}\int _{0}^{1}\frac{1-t}{\bes _{t}}\,dt. 
 \end{align}
 Note that the left-hand side is a centered Gaussian random variable with 
 variance $1/12$. By Jeulin's identity \eqref{;ji}, the right-hand side 
 of \eqref{;gauss1} is identical in law with 
 \begin{align}%%\label{;}
  \frac{1}{2}\int _{0}^{1}\loc _{H^{-1}(t)}\,dt
  -\int _{0}^{1}\frac{1-t}{\loc _{H^{-1}(t)}}\,dt. \label{;il}
 \end{align}
 We change variables with $t=H(x),\,x\ge 0$, to rewrite \eqref{;il} as 
 \begin{align}%%\label{;}
 &\frac{1}{2}\int _{0}^{\infty }\loc _{x}H'(x)\,dx
 -\int _{0}^{\infty }\frac{1-H(x)}{\loc _{x}}H'(x)\,dx 
 \label{;prf1}\\
 &=\frac{1}{2}\int _{0}^{\infty }\left( \loc _{x}\right) ^{2}dx
 -\int _{0}^{\infty }dx\int _{0}^{1}dt\,\ind _{\{ \bes _{t}>x\} } \notag \\
 &=\frac{1}{2}\int _{0}^{\infty }\left( \loc _{x}\right) ^{2}dx
 -\int _{0}^{1}\bes _{t}\,dt, \notag 
 \end{align}
 where the second line follows from the definition \eqref{;defH} of 
 $H$ and the third from Fubini's theorem. Combining this expression 
 with \eqref{;gauss1} yields 
 \begin{align*}
 \frac{1}{2}\int _{0}^{1}\br _{t}\,dt
 \stackrel{(d)}{=}\frac{1}{2}\int _{0}^{\infty }
 \left( \loc _{x}\right) ^{2}dx-\int _{0}^{1}\bes _{t}\,dt
 \end{align*}
 and concludes the proof. 
\end{proof}

We give a remark on the proof. In what follows we denote 
\begin{align*}
 M(r)=\max _{0\le t\le 1}\bes _{t}. 
\end{align*}
\begin{rem}
\thetag{1}\ We see from the above proof that the random variables 
\begin{align*}
 \int _{0}^{1}\bes _{t}\,dt, \qquad 
 \frac{1}{2}\int _{0}^{\infty }
 \left( \loc _{x}\right) ^{2}dx, \qquad 
 \frac{1}{2}\int _{0}^{1}\frac{1-t}{\bes _{t}}\,dt
\end{align*}
have the same law; they are also identical in law with 
\begin{align*}
 \frac{1}{2}\int _{0}^{t}\frac{t}{\bes _{t}}\,dt
\end{align*}
by the time-reversal: 
$
\{ \bes _{1-t}\} _{0\le t\le 1}
\stackrel{(d)}{=}\{ \bes _{t}\} _{0\le t\le 1}
$. The Laplace transform of the law of 
$\int _{0}^{1}\bes _{t}\,dt$ is given in \cite[Lemma~4.2]{gro} and 
\cite[Proposition~\thetag{5.5}]{by} in terms of a series expansion. \\
\thetag{2}\ We see from \eqref{;defH} that 
\begin{align*}
 \int _{0}^{\infty }\loc _{y}\,dy=\int _{0}^{M(r)}\loc _{y}\,dy=1. 
\end{align*}
Therefore, to be more specific, the second integral in 
\eqref{;prf1} should be written as 
\begin{align*}
 \int _{0}^{M(r)}\frac{1-H(x)}{\loc _{x}}H'(x)\,dx. 
\end{align*}
\end{rem}

Using the same reasoning as the above proof, we may obtain the 
following extension of \tref{;mt}: 
\begin{prop}\label{;pgauss}
 For every positive integer $n$, the random variable 
 \begin{align*}
  2\int _{[0,1]^{n}}\min \left\{ 
  \bes _{t_{1}},\ldots ,\bes _{t_{n}}
  \right\} dt_{1}\cdots dt_{n}
  -\frac{n+1}{2}\int _{0}^{\infty }
  \left( 1-H(x)\right) ^{n-1}\left( \loc _{x}\right) ^{2}dx
 \end{align*}
 has the Gaussian distribution with mean zero and variance 
 $1/(2n+1)$. 
\end{prop}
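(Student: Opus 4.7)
I would mimic the proof of \tref{;mt}, but applied to a weighted Brownian integral $\int_{0}^{1} g(t)\,\br_{t}\,dt$ for a suitably chosen function $g$. A Fubini-type argument (using $G(1)=0$, where $G(t):=\int_{t}^{1} g(s)\,ds$) rewrites this as the Wiener integral $\int_{0}^{1} G(t)\,d\br_{t}$, which is a centered Gaussian with variance $\int_{0}^{1} G(t)^{2}\,dt$. To hit the target variance $1/(2n+1)$, I would take $G(t)=(1-t)^{n}$, so that $g(t)=n(1-t)^{n-1}$.

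Substituting \eqref{;brm} into $\int_{0}^{1} g(t)\,\br_{t}\,dt$ and exchanging the order of integration on the two non-Brownian terms, exactly as in the derivation of \eqref{;gauss1}, the elementary identity $\int_{s}^{1} n(1-t)^{n-1}\,dt=(1-s)^{n}$ leads to
\begin{align*}
\int_{0}^{1} n(1-t)^{n-1}\,\br_{t}\,dt
 = (n+1)\int_{0}^{1}(1-t)^{n-1}\,\bes_{t}\,dt
 - \int_{0}^{1}\frac{(1-t)^{n}}{\bes_{t}}\,dt,
\end{align*}
which recovers twice \eqref{;gauss1} in the case $n=1$. Applying Jeulin's identity \eqref{;ji} to the right-hand side and then substituting $t=H(x)$, in direct analogy with the passage from \eqref{;il} to \eqref{;prf1}, turns the expression (in law) into
\begin{align*}
\frac{n+1}{2}\int_{0}^{\infty}(1-H(x))^{n-1}\bigl(\loc_{x}\bigr)^{2}dx
 - 2\int_{0}^{\infty}(1-H(x))^{n}\,dx.
\end{align*}

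It then remains to recast the $\min$-integral in the statement. From the layer-cake representation $\min\{\bes_{t_{1}},\ldots,\bes_{t_{n}}\}=\int_{0}^{\infty}\prod_{i=1}^{n}\ind_{\{\bes_{t_{i}}>x\}}\,dx$ and Fubini, one gets $\int_{[0,1]^{n}}\min\{\bes_{t_{1}},\ldots,\bes_{t_{n}}\}\,dt_{1}\cdots dt_{n}=\int_{0}^{\infty}(1-H(x))^{n}\,dx$. Combining these identities, and using that $-\br$ is again a Brownian motion to absorb the overall minus sign, yields the claim.

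The argument is structurally parallel to that of \tref{;mt}, so I expect no genuinely new technical obstacle; the Fubini exchanges are justified by positivity of the relevant integrands, and the possible singularities of $1/\bes$ near $t=0,1$ are handled exactly as in the original proof. The one delicate point is really the choice of weight: $g(t)=n(1-t)^{n-1}$ is essentially forced, since it must simultaneously produce variance $1/(2n+1)$ and match the precise ratio $2 : (n+1)/2$ between the two terms appearing in the statement of \pref{;pgauss}.
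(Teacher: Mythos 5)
Your proposal is correct and follows essentially the same route as the paper: the author likewise integrates \eqref{;brm} against $(1-t)^{n-1}$ (your weight $g(t)=n(1-t)^{n-1}$ is just the paper's choice rescaled by $n$), identifies the left side as the Wiener integral $\frac1n\int_0^1(1-t)^n\,d\br_t$ with variance $1/(n^2(2n+1))$, and then applies Jeulin's identity, the substitution $t=H(x)$, and the layer-cake/Fubini identification of $\int_0^\infty(1-H(x))^n\,dx$ with the $\min$-integral. No substantive difference.
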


\begin{proof}
 For each fixed $n$, we multiply both sides of \eqref{;brm} by 
 $(1-t)^{n-1}$ and integrate them over $[0,1]$. Then using 
 Fubini's theorem, we obtain 
 \begin{align}\label{;gauss2}
  \int _{0}^{1}(1-t)^{n-1}\br _{t}\,dt
  =\frac{n+1}{n}\int _{0}^{1}(1-t)^{n-1}\bes _{t}\,dt
  -\frac{1}{n}\int _{0}^{1}\frac{(1-t)^{n}}{\bes _{t}}\,dt. 
 \end{align}
 Since the left-hand side may be expressed as 
 $(1/n)\int _{0}^{1}(1-t)^{n}\,d\br _{t}$, we see that it is a 
 centered Gaussian random variable with variance 
 \begin{align*}
  \frac{1}{n^2}\int _{0}^{1}(1-t)^{2n}\,dt=\frac{1}{n^{2}(2n+1)}. 
 \end{align*}
 On the other hand, by Jeulin's identity \eqref{;ji}, the right-hand side 
 of \eqref{;gauss2} is identical in law with 
 \begin{align*}
  &\frac{n+1}{2n}\int _{0}^{\infty }(1-t) ^{n-1}
  \loc _{H^{-1}(t)}\,dt
  -\frac{2}{n}\int _{0}^{1}\frac{(1-t)^{n}}{\loc _{H^{-1}(t)}}\,dt\\
  &=\frac{n+1}{2n}\int _{0}^{\infty }
  \left( 
  1-H(x)
  \right) ^{n-1}\left( \loc _{x}\right) ^{2}dx
  -\frac{2}{n}\int _{0}^{M(r)}\left( 1-H(x)\right) ^{n}dx. 
 \end{align*}
 By \eqref{;defH}, we may rewrite the integral in the last term as 
 \begin{align*}
  \int _{0}^{M(r)}dx\left( 
  \int _{0}^{1}dt\,\ind _{\{ \bes _{t}>x\} }
  \right) ^{n}
  &=\int _{0}^{M(r)}dx\int _{[0,1]^{n}}
  dt_{1}\cdots dt_{n}\,\prod_{i=1}^{n}\ind _{\{ r_{t_{i}>x}\} }\\
  &=\int _{[0,1]^{n}}\min \left\{ 
  \bes _{t_{1}},\ldots ,\bes _{t_{n}}
  \right\} dt_{1}\cdots dt_{n}, 
 \end{align*}
 where we used Fubini's theorem for the second equality. Combining these 
 leads to the conclusion. 
\end{proof}

We end this note with a comment on a relevant fact deduced from the 
proof of \pref{;pgauss}. 
\begin{rem}
 It is well known (see, e.g., \cite[Equation~\thetag{5d}]{by}) that 
 \begin{align*}
  M(r)\stackrel{(d)}{=}\frac{1}{2}\int _{0}^{1}\frac{dt}{\bes _{t}}; 
 \end{align*}
 in fact, Jeulin's identity \eqref{;ji} entails that 
 \begin{align*}
  \frac{1}{2}\int _{0}^{1}\frac{dt}{\bes _{t}}
  \stackrel{(d)}{=}\int _{0}^{M(r)}\frac{1}{\loc _{x}}\times \loc _{x}\,dx
  =M(r). 
 \end{align*}
 Combining this fact with a part of the proof of \pref{;pgauss}, 
 one sees that the sequence of random variables 
 \begin{align*}
  M(r),\quad \int _{[0,1]}\bes _{t}\,dt,\quad 
  \int _{[0,1]^{2}}\min \left\{ \bes _{t_{1}},\bes _{t_{2}}\right\} 
  dt_{1}dt_{2},\ldots 
 \end{align*}
 is identical in law with 
 \begin{align*}
  \frac{1}{2}\int _{0}^{1}\frac{(1-t)^{n}}{\bes _{t}}\,dt, 
  \quad n=0,1,2,\ldots , 
 \end{align*}
 as well as with 
 \begin{align*}
  \frac{1}{2}\int _{0}^{1}\frac{t^{n}}{\bes _{t}}\,dt, 
  \quad n=0,1,2,\ldots 
 \end{align*}
 by the time-reversal. 
\end{rem}

%%%%%%%%% References %%%%%%%%%

\end{document}